\def\ip#1#2{\<#1,#2\>}
\def\scr{\mathcal}
\def\hatscr#1{\hat{\scr #1}}
\def\e{\varepsilon}
\def\Conv{\operatorname{Conv}}
\def\bdy{\partial}
\def\ditemfirst#1#2{\begin{enumerate}\item \label{#1} #2\suspend{enumerate}}
\newtheorem{lem}{Lemma}
\newtheorem{prp}{Proposition}
\newtheorem{thm}{Theorem}
\def\<{\langle}
\def\>{\rangle}
\def\R{\mathbb R}
\title[Domination and Proper Scoring Rules]{Necessary and Sufficient Conditions for Domination Results for Proper Scoring Rules}
\author{Alexander R. Pruss}
\begin{document}
\sloppy
\begin{abstract}
Scoring rules measure the deviation between a forecast, which assigns degrees of confidence to various events, and reality. Strictly proper scoring rules have the property
that for any forecast, the mathematical expectation of the score of a forecast $p$ by the lights of $p$  is strictly better than the
mathematical expectation of any other forecast $q$ by the lights of $p$. Forecasts need not satisfy the axioms of
the probability calculus, but Predd, et al.~(2009) have shown that given a finite sample space and any strictly proper \textit{additive and continuous}
scoring rule, the score for any forecast that does not satisfy the axioms of probability is strictly dominated by the score for some
probabilistically consistent forecast. Recently, this result has been extended to non-additive continuous scoring rules. In this paper,
a condition weaker than continuity is given that suffices for the result, and the condition is proved to be optimal.
\end{abstract}

\maketitle
\section{The main results}
Scoring rules measure the deviation between a forecast, which assigns degrees of confidence or credence to various events, and reality. Strictly proper scoring rules have the property
that for any forecast, the mathematical expectation of the score of a forecast $p$ by the lights of $p$  is strictly better than the
mathematical expectation of any other forecast $q$ by the lights of $p$. Forecasts need not satisfy the axioms of probability, but under
some conditions whose discussion is the main purpose of this paper, the score of a forecast that does not satisfy the axioms of probability
is strictly dominated by the score of a forecast by that does satisfy these axioms. This result has been interpreted by epistemologists
as supporting the idea that reasonable forecasts will always be probabilistically consistent (e.g., \cite{Joyce98}, \cite{GW}, \cite{Pettigrew16}).

To be precise, let $\Omega$ be a finite sample space, encoding the possible situations that the forecasts concern.
Let $\scr C$ be the set of all functions from the power set of $\Omega$ to the reals: these we call
credence functions. Let $\scr P$ be the subset of $\scr C$ which consists of the functions satisfying the axioms of probability.
Members of $\scr C$ can be thought of as forecasts regarding $\Omega$. An \textit{accuracy} (respectively, \textit{inaccuracy})
\textit{scoring rule} is a function $s$ from a set $\scr F \supseteq \scr P$ of credence function to $[-\infty,M]^\Omega$ (respectively, $[M,\infty]^\Omega$) for some finite $M$,
where $A^B$ is the set of functions from $B$ to $A$.
Then $s(c)(\omega)$ for $c\in\scr F$ measures
the accuracy of the forecast $c$ when we are in fact at $\omega\in\Omega$, with higher (respectively, lower) values being more accurate.
Since inaccuracy and accuracy scoring rules differ merely by a sign, we shall now assume that scoring rules are accuracy scoring rules,
translating results from the literature as needed.

Given a probability $p \in \scr P$ and an extended real function $f$ on $\Omega$, let $E_p f$ be the expected value with respect to $p$
defined in the following way to avoid multiplying infinity by zero:
$$
    E_p f = \sum_{\omega \in \Omega, p(\{\omega\})\ne 0} p(\{\omega\}) f(\omega).
$$
We then say that a scoring rule $s$ is \textit{proper} on $\scr F\supseteq \scr P$ provided that for every $p\in\scr P$ and
every $c\in\scr F$, we have $E_p s(p) \ge E_p s(c)$, that it is \textit{strictly proper} there provided the inequality
is always strict, and that it is \textit{quasi-strictly proper} there provided that it is proper and the inequality is strict when $p\in\scr P$ and
$c\in\scr F\backslash\scr P$.

Propriety captures the idea that if an agent adopts a probability function $p$ as their forecast, then by the agent's lights there can be no improvement in
the expected score from switching to a different forecast. Strict propriety captures the idea that an agent who has adopted a probability function $p$ as their forecast
will think other forecasts to be inferior. Proper and strictly proper scoring rules have been widely studied: for a few examples, see
\cite{DM}, \cite{GR}, \cite{Pettigrew16}, \cite{Predd}, \cite{WMC}.

A scoring rule is said to be \textit{additive} provided that $\scr F=\scr C$ and there is a collection of extended-real functions $(s_A)_{A\subseteq\Omega}$ on
$\R\times\{0,1\}$ such that for all $c\in\scr F$ and $\omega\in\Omega$:
$$
    s(c)(\omega) = \sum_{A \subseteq \Omega} s_A(c(A),1_A(\omega)).
$$

The set of probabilities $\scr P$ can be equipped with the topology resulting from its natural embedding $\psi$ in the product space $[0,1]^\Omega$,
where $\psi(p)(\omega) = p(\{\omega\})$. Thus, a sequence of probabilities $(p_n)$ converges to a probability $p$ just in case $p_n(\{\omega\})\to p(\{\omega\})$
for all $\omega\in\Omega$.

A scoring rule is \textit{probability-continuous} provided that the restriction of $s$ to $\scr P$ is a continuous function
to $[-\infty,M]^\Omega$ equipped with the Euclidean topology.

Say that a score $s(c_1)$ is \textit{weakly dominated} by a score $s(c_2)$ provided that $s(c_2)(\omega) \ge s(c_1)(\omega)$
for all $\omega\in\Omega$, and \textit{strictly dominated} if the inequality is strict.

Predd, et al.~\cite{Predd} showed that if $s$ is a probability-continuous additive strictly proper scoring rule, then for any non-probability $c$, there is a probability $p$ such
that $s(c)$ is strictly dominated by $s(p)$. In other words, any forecaster whose forecast fails to be a probability can find a forecast
that is a probability and that is strictly better no matter what. Recently, Pettigrew~\cite{PettigrewNew} announced that this
result holds without the assumption of additivity, merely assuming probability-continuity.  This proof was shown to have flaws~\cite{Nielsen},
but correct proofs were found by Nielsen~\cite{Nielsen} and Pruss~\cite{Pruss}. Nielsen's proof also extended the result to the quasi-proper case.
A philosophical upshot of these results is that we can get an argument in favor of probabilistic consistency in one's credence assignments under much
weaker conditions than the additivity assumed by Predd, et al.

However, the Pettigrew-Nielsen-Pruss theorem still assumes the continuity of the scoring rule. The purpose of the present paper is to identify what
is the weakest possible assumption on a strictly proper scoring rule as restricted to the probability functions that guarantees the strict domination property.

We need some definitions to state our main result.

Say that a score $s\in [-\infty,M]^\Omega$ is finite provided that $|s(\omega)|<\infty$ for all $\omega$.
Note that $\R^\Omega$ is an $n$-dimensional vector space.
Let $\ip\cdot\cdot$ be the usual scalar product on $\R^\Omega$: $\ip fg = \sum_{\omega} f(\omega)g(\omega)$
for any $f,g\in \R^\Omega$.
We say that a boundary point $z$ of a set $G\subseteq \R^\Omega$ is positive-facing provided that $G$ has a normal $v$ at $z$ all of whose
components are positive, i.e., there is a $v\in (0,\infty)^\Omega$ such that $\ip vw \le \ip vz$ for all $w\in G$. Write $\bdy^+ G$ for the set of
all the positive-facing boundary points.

Write $\Conv F$ for the convex hull of $F$, i.e., the union of all the line segments with
endpoints in $F$.  A set $A$ is dense in a set $B$ in a topological
space if every open set that intersects $B$ also intersects $A$ (e.g., the rational numbers are dense in the reals).

\begin{thm}\label{th:nec} Consider a proper scoring rule $s$ on $\scr P$. Then the following conditions are equivalent:
\begin{enumerate}
    \item[(a)] for every extension of $s$ to a quasi-strictly proper scoring rule $s':\scr C\to [-\infty,M]^\Omega$,
        if $c\in\scr C\backslash\scr P$, then there is a $p\in\scr P$ such that $s'(p)=s(p)$ strictly dominates $s'(c)$
    \item[(b)] either $E_p s(p)$ is infinite for some $p\in\scr P$ or both:
        \begin{enumerate}
            \item[(i)] for any sequence $(p_n)$ in $\scr P$ that converges to some probability function $p$ such
                that $s(p_n)$ is finite for all $n$ while $s(p)$ is not finite, we have $\lim_n E_{p_n} s(p_n) = E_p s(p)$, and
            \item[(ii)] if $F=s[\scr P]\cap\R^\Omega$ is the set of finite scores, then $F$ is dense in $\bdy^+ \Conv F$.
        \end{enumerate}
\end{enumerate}
\end{thm}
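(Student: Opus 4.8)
The plan is to reduce to a non‑degenerate case and then prove the two implications separately: the forward implication by convex geometry on the set of scores, the reverse one by building adversarial extensions.

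\emph{Reduction and preliminaries.} If $E_p s(p)=-\infty$ for some $p$ (the only way it can fail to be finite, since $s$ is bounded above by $M$), then $s$ has \emph{no} quasi‑strictly proper extension to $\scr C$ at all: propriety at $p$ of any extension $s'$ would force $E_p s'(c)=-\infty$ for every $c$, contradicting the strict clause when $c\notin\scr P$. So (a) holds vacuously and (b) holds by its first disjunct. Hence I may assume $E_p s(p)$ is finite for all $p$, and I identify $p$ with $\psi(p)$ in the simplex $\Delta$, so that $p$ is fully supported iff $\psi(p)\in\Delta^\circ$. I would record four facts. (1) For fully supported $p$, $s(p)$ is finite, lies in $F$, and maximizes $\ip{v}{\cdot}$ over $F$ with $v=\psi(p)$, so $\ip{\psi(p)}{s(p)}=h_F(\psi(p)):=\sup_{x\in F}\ip{\psi(p)}{x}$, the support function of $\Conv F$. (2) $h_F$ is \emph{continuous} on $\Delta$: lower semicontinuity is automatic, and for upper semicontinuity one uses that every coordinate on $F$ is $\le M$ to see that a near‑maximizer for a nearby direction cannot escape to $-\infty$ in a coordinate carrying positive mass. (3) Consequently (b)(i) is equivalent to $E_p s(p)=h_F(\psi(p))$ for \emph{every} $p$. (4) For a quasi‑strictly proper extension $s'$, a point $c\in\scr C\setminus\scr P$ and $y:=s'(c)$, propriety plus the strict clause give $E_p s(p)>E_p y$ for all $p$; call this $(\star)$. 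For fully supported $p$ this reads $h_F(v)>\ip{v}{y}$, $v=\psi(p)$.

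\emph{(b) $\Rightarrow$ (a).} Given (b)(i),(ii) and $s',c,y$ as above, I must find $p$ with $s(p)$ strictly dominating $y$, equivalently $y<x$ componentwise for some $x\in F$ (only finite scores strictly dominate anything). After a reduction passing to the sub‑sample‑space on which $y$ is finite --- using that (b)(i)--(ii) restrict to such faces --- assume $y$ finite. Put $\eps(v)=h_F(v)-\ip{v}{y}$, a convex function, continuous on $\Delta$ by (2) and positive on $\Delta^\circ$ by $(\star)$ and (1); hence $\eps\ge0$ on $\Delta$, and if $\eps(\psi(p_0))=0$ for some non‑fully‑supported $p_0$ then $\ip{\psi(p_0)}{y}=h_F(\psi(p_0))$, so $(\star)$ at $p_0$ gives $E_{p_0}s(p_0)>h_F(\psi(p_0))$, contradicting (3). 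Thus $m:=\min_\Delta\eps>0$. Then $h_F(v)\ge\ip{v}{y+m\mathbf 1}$ for all $v\ge0$, which means $y+m\mathbf 1\in\overline{\Conv F-[0,\infty)^\Omega}$; since every coordinate on $\Conv F$ is $\le M$, this forces $y+m\mathbf 1\le z'$ for some $z'\in\overline{\Conv F}$. Let $w^*$ maximize $\ip{\mathbf 1}{\cdot}$ over the compact set $\{w\in\overline{\Conv F}:w\ge y+m\mathbf 1\}$; decomposing $\mathbf 1$ into a normal of $\overline{\Conv F}$ at $w^*$ plus a normal of the box constraint there (the latter $\le0$ coordinatewise), the former is $\ge\mathbf 1>0$, so $w^*\in\bdy^+\Conv F$ and $w^*\ge y+m\mathbf 1>y$. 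By (b)(ii) there is $x\in F$ with $\|x-w^*\|_\infty<m$, whence $x>y$, and $x=s(p)=s'(p)$ strictly dominates $s'(c)$.

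\emph{(a) $\Rightarrow$ (b).} I argue the contrapositive: assuming non‑degeneracy and the failure of (b)(i) or of (b)(ii), I exhibit a quasi‑strictly proper extension $s'$ and a $c\notin\scr P$ with $s'(c)$ dominated by no $s(p)$. In both cases $s'$ agrees with $s$ on $\scr P$, takes a prescribed value $y_0$ at one chosen $c\notin\scr P$, and on all other non‑probabilities equals a constant strictly below $\inf_p E_p s(p)$ (finite, since $E_p s(p)\ge\min_\omega s(q)(\omega)$ for any fixed finite $s(q)$); the things to check are quasi‑strict propriety and that $y_0$ is undominated. If (b)(ii) fails, density together with its openness clause yields $z\in\bdy^+\Conv F$ with a strictly positive normal $v$ and $B(z,r)\cap F=\emptyset$ for some $r>0$; take $y_0=z-\delta\mathbf 1$. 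Then $E_p y_0=E_p z-\delta\le E_p s(p)-\delta<E_p s(p)$ (as $z\in\overline{\Conv F}$ and $s$ is proper, $E_p z\le E_p s(p)$), and if $s(p)>y_0$ then $s(p)$ is finite and $\ip{v}{s(p)}\le\ip{v}{z}$ forces $\|s(p)-z\|_\infty<C(v)\delta$, impossible for $\delta$ small since $s(p)\in F$. If (b)(i) fails, then by (3) there is $p_0$ with $s(p_0)$ not finite and $\gamma:=E_{p_0}s(p_0)-h_F(\psi(p_0))>0$; choosing $\omega^*$ with $p_0(\{\omega^*\})=0$ and $s(p_0)(\omega^*)=-\infty$, set $y_0(\omega^*)=-\infty$ and $y_0(\omega)=s(p_0)(\omega)-\delta$ for $\omega\ne\omega^*$, with $0<\delta<\gamma$. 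One checks $E_p y_0\le E_p s(p)-\delta<E_p s(p)$ when $p(\{\omega^*\})=0$ and $E_p y_0=-\infty$ otherwise; and if $s(p)>y_0$ then $s(p)$ must be finite, so $s(p)\in F$, and taking the $\psi(p_0)$‑inner product of $s(p)>y_0$ (valid on the support of $p_0$, where $y_0$ is finite) gives $\ip{\psi(p_0)}{s(p)}>E_{p_0}s(p_0)-\delta=h_F(\psi(p_0))+\gamma-\delta$, contradicting $\ip{\psi(p_0)}{s(p)}\le h_F(\psi(p_0))$. In each case $s'$ witnesses the failure of (a).

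\emph{Main obstacle.} I expect the genuinely delicate points to be, on the forward side, the reduction removing the $-\infty$ coordinates of $y$ (it rests on the claim that (b)(i) and (b)(ii) descend to the face $\{\omega:y(\omega)>-\infty\}$) together with the normal‑cone computation placing $w^*$ in $\bdy^+\Conv F$; and on the reverse side, checking --- uniformly in $p$ --- that the two constructed extensions really are quasi‑strictly proper, which is exactly what the margin $r$ and the gap $\gamma$ are there to provide.
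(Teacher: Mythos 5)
Your overall architecture---reduce to the case of finite expected score, prove $(b)\Rightarrow(a)$ via support‑function geometry, and prove $(a)\Rightarrow(b)$ contrapositively by exhibiting a bad extension---is exactly the paper's architecture, and several of your steps are correct and genuinely streamline the presentation. Your observation (2)--(3), that $\sigma_F$ (your $h_F$) is automatically continuous on the simplex and hence (b)(i) is \emph{equivalent} to $E_ps(p)=\sigma_F(\psi(p))$ for all $p$, is a nice packaging: the paper only proves the implication from (b)(i) to this identity (Lemma~\ref{lem:geometrical}), and the converse your formulation gives is not needed by the paper but is a clean way to see what (b)(i) really says. Your contrapositive $(a)\Rightarrow(b)$ argument (both sub‑cases) is essentially the paper's construction, correctly carried out, with the minor cosmetic difference that you use a second constant value off the chosen $c$ whereas the paper just sets $s'(c')=y_0$ for all non‑probabilities $c'$.

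The genuine gap is the step you yourself flag in $(b)\Rightarrow(a)$: ``after a reduction passing to the sub‑sample‑space on which $y$ is finite \ldots\ assume $y$ finite.'' As proposed---restrict (b)(i)--(ii) to the face $T=\{\omega:y(\omega)>-\infty\}$ and solve the smaller problem there---this does not work. The reduced problem on $T$ can at best produce a $p$ (supported on $T$) whose \emph{restricted} score $s(p)|_T$ strictly dominates $y|_T$; nothing in that reduced problem controls the coordinates $\omega\notin T$, and you need $s(p)$ to be finite there as well. Indeed, the paper's own family $s_\alpha$ (final paragraphs of \S1) shows this is not a vacuous worry: there are scoring rules satisfying (b) for which $s(p)(\omega)=-\infty$ whenever $p(\{\omega\})=0$, so any $p$ supported on $T$ yields $s(p)$ that is $-\infty$ exactly where $y$ is $-\infty$, giving no domination. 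The fix that the paper uses is Lemma~\ref{lem:cone1} (hyperplane separation): since $\ip{v}{y}<\sigma_F(v)$ for all $v\in\hatscr P$, $y$ is strictly dominated by a point $z_2$ of $\Conv F$, and one then takes a finite point between $y$ and $z_2$ (the $z_1$ construction), which restores your argument with no change of sample space. Your support‑function duality can also be repaired directly---e.g.\ replace $y$ by a finite $y_R$ equal to $y$ on $T$ and to $-R$ off $T$ for $R$ large, noting that $h_F$ and $\ip{\cdot}{y|_T}$ are bounded so that for $R$ large one still has $h_F(v)\ge\ip{v}{y_R}+m/2$ uniformly on $\Delta$---but some such move is necessary, and ``(b)(i)--(ii) descend to the face'' is not it.

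A smaller point: your normal‑cone step (maximizing $\ip{\mathbf 1}{\cdot}$ over $\overline{\Conv F}\cap\{w\ge y+m\mathbf 1\}$ and decomposing $\mathbf 1$ into a normal to $\overline{\Conv F}$ plus a nonpositive normal to the box) invokes additivity of normal cones for an intersection, which requires a constraint qualification; when $\overline{\Conv F}$ meets the translated orthant only along its boundary this is not automatic. The paper sidesteps this entirely with Lemma~\ref{lem:convex} (fedja's trick of maximizing the \emph{product} of coordinates), whose gradient is strictly positive at the maximizer for free and whose optimality condition needs no constraint‑qualification because the orthant is open there. Switching to that objective would close this secondary gap as well.
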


Combining the above with the following will yield a new proof of the Pettigrew-Nielsen-Pruss theorem.

\begin{prp}\label{prp:cont} Suppose that $s$ is quasi-strictly proper and continuous on $\scr C$. Then condition (b) of Theorem~\ref{th:nec}
    is fulfilled.
\end{prp}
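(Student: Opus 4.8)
If $E_p s(p)$ is infinite for some $p\in\scr P$, the first disjunct of (b) holds and there is nothing more to do, so the plan is to assume from now on that $E_p s(p)\in\R$ for every $p\in\scr P$ and to verify (i) and (ii). I would set $G(p)=E_p s(p)$ and, identifying each $p$ with $\psi(p)\in[0,1]^\Omega$, use the standard reformulation of propriety: it says precisely that for each $p$ the affine function $q\mapsto\ip q{s(p)}$ lies below $G$ on $\scr P$ and touches it at $p$; hence $G$ is convex and $p\mapsto s(p)$ selects a subgradient of $G$ at each point. I would also record the elementary fact that if $p$ has full support then $E_p s(p)=\sum_\omega p(\{\omega\})s(p)(\omega)$ being finite forces $s(p)$ itself to be finite (each summand is at most $p(\{\omega\})M$), so $s(p)\in F$ for every full-support $p$.

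For (i), let $(p_n)$ converge to $p$ with every $s(p_n)$ finite but $s(p)$ not finite. By probability-continuity $s(p_n)(\omega)\to s(p)(\omega)$ in $[-\infty,M]$ for each $\omega$, and every $\omega$ with $s(p)(\omega)=-\infty$ must have $p(\{\omega\})=0$, as otherwise $G(p)=-\infty$. I would split $E_{p_n}s(p_n)$ into the partial sum over $\{\omega:p(\{\omega\})>0\}$, which converges to $E_p s(p)$ since each of its finitely many (finite) terms does, and the partial sum over $\{\omega:p(\{\omega\})=0\}$, which is at most $M\sum_{\omega:p(\{\omega\})=0}p_n(\{\omega\})\to0$; this gives $\limsup_n E_{p_n}s(p_n)\le E_p s(p)$. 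For the opposite inequality I would apply propriety at $p_n$ against a full-support probability: choosing full-support $q_k\to p$ (say $q_k=(1-1/k)p+(1/k)u$ with $u$ uniform), we get $E_{p_n}s(p_n)\ge E_{p_n}s(q_k)$, and since $s(q_k)$ is bounded, letting $n\to\infty$ yields $\liminf_n E_{p_n}s(p_n)\ge E_p s(q_k)$; letting $k\to\infty$ and using continuity ($E_p s(q_k)\to E_p s(p)$, again termwise on the finitely many nonzero-weight coordinates) gives $\liminf_n E_{p_n}s(p_n)\ge E_p s(p)$. Together these prove $\lim_n E_{p_n}s(p_n)=E_p s(p)$.

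For (ii) I would prove the stronger statement that $\bdy^+\Conv F\subseteq F$. The key point is that probability-continuity forces $G$ to be differentiable at every relative-interior point $q$ of $\scr P$: were $\partial G(q)$ not a singleton, then --- using that differentiability points of a convex function are dense, and that $\partial G(q)$ is the closed convex hull of the limits $\lim_k s(q_k)$ over sequences $q_k\to q$ of differentiability points --- one could extract two such sequences whose images under $s$ tend to distinct points, contradicting continuity of $s$ at $q$. Now take $z\in\bdy^+\Conv F$ with positive normal $v$; rescaling $v$ (which does not affect its defining inequality) we may take $v=\psi(p_v)$ for a full-support $p_v$. Propriety gives $\ip v{s(c)}=E_{p_v}s(c)\le E_{p_v}s(p_v)=\ip v{s(p_v)}$ for every $c\in\scr P$ with $s(c)$ finite, so $s(p_v)\in F$ attains $\alpha_v:=\sup_{w\in\Conv F}\ip vw$; moreover any such $c$ with $\ip v{s(c)}=\alpha_v$ makes $s(c)$ a subgradient of $G$ at $p_v$, hence $s(c)=s(p_v)$ by differentiability --- that is, $F\cap\{w:\ip vw=\alpha_v\}=\{s(p_v)\}$. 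Finally, writing $z$ as a limit of points of $\Conv F$ and invoking Carath\'eodory together with compactness of $s[\scr P]$ in $[-\infty,M]^\Omega$, I would argue that $z$ is in fact a finite convex combination of points of $F$ --- passing to the limit can only drop mass or introduce finite limit points, since a surviving $-\infty$ coordinate would force $\ip vz=-\infty$ --- and then that each of those points lies in the exposed hyperplane $\{w:\ip vw=\alpha_v\}$ (their $v$-weighted average equals the maximum $\alpha_v$), hence equals $s(p_v)$; so $z=s(p_v)\in F$.

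I expect the (ii) argument to be the main obstacle, and within it two things in particular: the implication ``probability-continuity $\Rightarrow$ $G$ is differentiable on the relative interior of $\scr P$'', which leans on the structure theory of subdifferentials of convex functions, and the careful bookkeeping of $-\infty$ coordinates needed to see that a positive-facing boundary point of $\Conv F$ is an honest convex combination of points of $F$, not merely a limit of such. The convexity inputs themselves (convexity of $G$, upper semicontinuity and the limiting description of $\partial G$, Carath\'eodory, Milman's theorem) are standard.
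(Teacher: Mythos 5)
Your proof of (b)(i) is a correct alternative to the paper's.  The paper argues that $p\mapsto E_p s(p)$ is upper semicontinuous directly from the upper semicontinuity of the extended scalar product, and lower semicontinuous because it equals $\sup_{q\in\scr R}E_p s(q)$, a supremum of functions that are continuous in $p$ (the Nielsen observation).  Your version instead splits the sum $E_{p_n}s(p_n)$ by the support of $p$ and squeezes with full-support comparisons $q_k\to p$.  Both are sound; yours is arguably more elementary, while the paper's generalizes more smoothly to the infinite-score bookkeeping done in Lemma~\ref{lem:limit}.

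For (b)(ii) you take a genuinely different route.  The paper proves the strictly proper case directly from strict propriety (the exposed face of $\Conv F$ at a regular $p$ is the singleton $\{s(p)\}$, no differentiability needed) and handles the quasi-strictly proper case by perturbing $s$ to $s_\delta=s+\delta b$ with a strictly proper $b$, then approximating.  You instead prove that probability-continuity forces $G(p)=E_p s(p)$ to be differentiable on $\hatscr R$, which lets you treat both cases at once.  The differentiability argument is correct in outline, but two things need attention.  First, the subdifferential of $G$ lives on the affine hull of $\hatscr P$, so a vector $z\in\R^\Omega$ represents a subgradient only up to adding a multiple of $\mathbf 1$; to conclude $s(c)=s(p_v)$ from the tangent functions agreeing you also need the normalization $\ip{p_v}{s(c)}=G(p_v)=\ip{p_v}{s(p_v)}$ and $\ip{p_v}{\mathbf 1}=1$ to kill the $\mathbf 1$-shift.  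This is straightforward but should be said.

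The more substantial gap is in the Carath\'eodory limiting step.  Passing along a subsequence you get $z_k=\sum_i c_i^{(k)}w_i^{(k)}\to z$ with $c_i^{(k)}\to c_i$, $w_i^{(k)}\to w_i$, and you correctly note that any $w_i$ with $c_i>0$ must be finite (else a $-\infty$ coordinate survives).  But you cannot conclude $z=\sum_{c_i>0}c_i w_i$ directly: the terms with $c_i=0$ satisfy $c_i^{(k)}w_i^{(k)}(\omega)\le Mc_i^{(k)}\to 0$ from above, yet they can tend to a strictly negative limit (take $c_i^{(k)}=1/k$, $w_i^{(k)}(\omega)=-k$).  So all you get from the limit is $z\le A:=\sum_{c_i>0}c_i w_i$, not $z=A$.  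The repair, which is worth making explicit, is to use the positive normal $v$ one more time: $A\in\Conv F$ so $\ip vA\le\alpha_v=\ip vz$, while $z\le A$ and $v\in(0,\infty)^\Omega$ force $\ip vz\le\ip vA$, hence $\ip v{(A-z)}=0$ with $A-z\ge 0$, hence $z=A$.  With that and the affine-hull remark filled in, your argument goes through and gives the stronger conclusion $\bdy^+\Conv F\subseteq F$, the same conclusion the paper gets, but via a unified differentiability lemma rather than a perturbation to the strictly proper case.
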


It follows from Lemma~\ref{lem:limit}, below, that for any proper scoring rule $s$, the function $p\mapsto E_p s(p)$ is continuous on the
probabilities with finite score. Thus, in condition (b)(i) of Theorem~\ref{th:nec} we can drop the restriction that $s(p)$ is not finite.

The proofs of the Theorem and Proposition will be given in the next section.

Theorem~\ref{th:nec} becomes simpler in the special case where all the values of the proper scoring rule $s$ are finite, because
in that case condition (b)(i) is always satisfied, and (b)(ii) is necessary and sufficient for the domination condition (a).

To visualize geometrically what the crucial condition (b)(ii) says, identify our space $\Omega$ with
the set $\{1,2,...,n\}$. Then a probability $p$ can be thought of as a vector in $n$-dimensional Euclidean space $\R^n$ whose $k$th
coordinate is $p(\{k\})$, with all the coordinates non-negative and summing to one, and a score $s(p)$ can be thought of as an extended-real
``vector'' whose $k$th coordinate is $s(p)(k)$. By abuse of notation, in this geometrical explanation we won't distinguish probabilities
and the vectors corresponding to them, or scores and the vectors corresponding to them (we will be a little more careful when we get to proofs).
Then $F$ is the set of scores that lie in $\R^n$. The set $\bdy^+ \Conv F$ consists of
those points $z$ on the boundary of the convex hull of $F$ such that some ray starting at $z$ whose direction is positive in every coordinate
immediately leaves the convex hull of $F$. Condition (b)(ii) then says that any such $z$ is the limit of some sequence $s(p_i)$ of finite scores of
probabilities $p_i$.

When we form the convex hull of $F$, we are adding to $F$ various line segments to obtain a convex set (a set that contains the
line segment joining every pair of points in it). Doing this may increase the positive-facing boundary of $F$ to include some additional
line segments. Condition (b)(ii) then says that any point on any of these additional line segments has a point of $F$ arbitrarily
close to it. In some sense, this means that $F$ doesn't have any positive-facing open gaps.

We can illustrate this by describing a finite strictly proper scoring rule that does not satisfy (b)(ii).
Suppose $\Omega=\{1,2\}$,
so our probabilities and scores are identified with points in the plane $\R^2$. Given a probability $p$, i.e., a non-zero vector
with both coordinates non-negative, let $\theta(p)$ be the angle that $p$ makes with the $x$-axis. Then $\theta(p)$ ranges between
$0$ and $\pi/2$ radians. If $\theta(p)\le \pi/4$, let $s(p)$ be the point at angle $\theta(p)$ on the circle $T_1$ of radius $1$ with center $(1,0)$. Thus, $s(p)=(1+\cos \theta(p),\sin \theta(p))$. If $\pi/4<\theta(p)$, let $s(p)$ be the point at angle $\theta(p)$ on the circle
$T_2$ of radius $1$ with center $(0,1)$, so $s(p)=(\cos\theta(p),1+\sin\theta(p))$ (see Figure~\ref{fig:twocirc}, left). This is a strictly proper scoring
rule.\footnote{Here is a quick geometric sketch why. Consider a probability $p$ at angle $\theta(p)$. Let $L$ be a line through $s(p)$
tangent to the circle that $s(p)$ is on (Figure~\ref{fig:twocirc}, left).  The line $L$ is tangent to one of our circles at the point $s(p)$, which is at angle $\theta(p)$ on
the relevant circle. Thus, $L$ has angle $\theta(p)+\pi/2$ and is thus perpendicular
to the vector $p$, which has angle $\theta(p) \in [0,\pi/2]$. The line $L$ cuts the plane into two open half-spaces. Let $H$ be the half-space
that contains the origin (this half-space is below/left of $L$---see the Figure). Then $H$ is the set of points $z$ such that $\< p, z \> < \< p, s(p) \>$, where $\<\cdot,\cdot\>$ is the dot product. But $\< p, z \> = E_p z$ and
$\< p, s(p) \> = E_p s(p)$. Now, all possible scores lie on the arcs $AB$ and $CD$, with the point $C$ not being a possible score, and
so it is easy to see (considering the case where $s(p)$ is $B$ separately) that all the possible scores $s(q)$ for $q\ne p$ lie below
and/or to the left of the line $L$. Hence they all satisfy $E_p s(q) < E_p s(p)$, and we have strict propriety.} It's worth noting
if $T_1$ and $T_2$ were both the unit circle (so that the $\pi/4$ switchover was irrelevant), the resulting scoring rule would have been the
spherical scoring rule.

\begin{figure}\label{fig:twocirc}
\includegraphics[height=5cm]{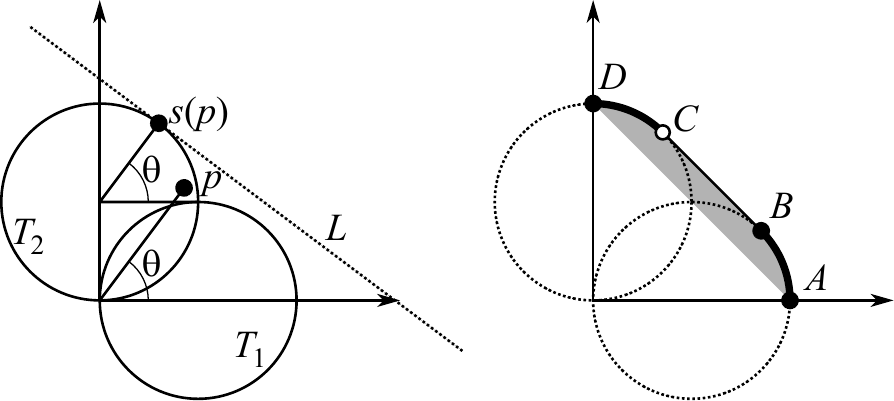}
\caption{Left: Calculating the score of $p=(3/7,4/7)$ with $\theta=\theta(p)=0.927>\pi/4$. Right: Thick lines and filled nodes indicate the set $F$. Shading indicates the convex hull. The set $\bdy^+ \Conv F$ consists of
of the thick lines, the thin line segment $BC$, and the filled and unfilled nodes.}
\end{figure}
In Figure~\ref{fig:twocirc} (right), the set $F$ of values (all finite) of $s$ consists of an arc $AB$ of $T_1$ from angle $0$ inclusive to $\pi/4$ inclusive together
with an arc $CD$ of $T_2$ from angle $\pi/4$ to $\pi/2$. The convex hull of $F$ consists
of the shaded region in the figure as well as some parts of the boundary of the shaded region\footnote{The
set $\Conv F$ includes all of its topological boundary except for the point $C$ and the interior of the line segment $BC$, but it doesn't
matter for the definition of $\bdy^+ \Conv F$ which parts of the boundary are members of $\Conv F$.}). The positive-facing boundary of $\Conv F$
consists of the arcs $AB$ and $CD$ as well as the line segment $BC$, together with all four endpoints. We can now see that $F$ is not dense
in $\bdy^+ \Conv F$, because $F$ has a gap consisting of the interior of the line segment $BC$, while $\Conv F$ has positive-facing boundary there.

Finally, we give a family of examples of proper scoring rules that satisfy (b)(i) and (b)(ii) but are not continuous.
The family will even include some scores that are finite everywhere on $\Omega$.
Let $\scr R$ be the set of probabilities that are \textit{regular} in the Bayesian sense, i.e., probability functions $p$ such
that $p(\{\omega\})>0$ for all $\omega$. Note that if $p$ is regular and $s$ is strictly proper, then we must have $E_p(s(p)) > -\infty$
or else we couldn't have $E_p(s(q)) < E_p(s(p))$ for a different probability $q$. Then by regularity of $p$ it follows that $s(p)$ is
finite.

Let $s$ be any continuous strictly proper everywhere-finite accuracy scoring rule.
Choose any $\alpha \in [-\infty,M]$
such that $\alpha \le s(p)(\omega)$ for all $p\in\scr P$ and $\omega\in\Omega$.
Define a scoring rule $s_\alpha$ as follows.
If either $p\notin \scr P$ or $p\in\scr R$, let $s_\alpha(p)=s(p)$. If $p\in\scr P\backslash\scr R$, then let $s_\alpha(p)(\omega)=s(p)(\omega)$
if $p(\{ \omega \})>0$, and $s_\alpha(p)(\omega)=\alpha$ if $p(\{ \omega \}) = 0$. In other words, we have tweaked $s$ so that
in the case where the forecast assigns zero probability to some outcome $\omega$, we assign $\alpha$ there.
It is easy to see that $s_\alpha$ is strictly proper because $E_p s_\alpha(p) = E_p s(p)$ and $E_p s_\alpha(q) \le E_q s(q)$ for all
$p,q\in\scr P$.

For a fixed $\omega$, the set of possible values of $s(p)(\omega)$ is contained in a finite interval, since a continuous function on
a compact set takes on a compact set of values, and $\scr P$ is compact. Thus we can choose $\alpha$ so that $\alpha < s(c)(\omega)$
for all $\omega$, letting $\alpha$ be $-\infty$ or a finite value as we wish. Then $s_\alpha$ will be discontinuous everywhere on
$\scr P\backslash\scr R$, since $s_\alpha$ is discontinuous wherever it differs from $s$. And if $\alpha$ is finite, then $s_\alpha$ will
be everywhere finite as well.

We now show that $s_\alpha$ satisfies the conditions of Theorem~\ref{th:nec}. First, $s$ satisfies condition (b) by Proposition~\ref{prp:cont},
and hence it satisfies condition (a) by the Theorem. Now let $s_\alpha'$ be a quasi-strictly proper extension of $s_\alpha$ to $\scr C$.
Define $s'(c)=s(c)$ for $c\in\scr P$ and $s'(c)=s_\alpha'(c)$ for $c\notin\scr P$. Then $s'$ is quasi-strictly proper, because for any
$c\notin\scr P$ and $p\in\scr P$ we have
$$
    E_p s'(c) = E_p s_\alpha'(c) < E_p s_\alpha(p) = E_p s(p) = E_p s'(p),
$$
where the first inequality follows from the quasi-strict propriety of $s_\alpha'$. Thus, because $s$ satisfies condition (a), for any $c\in\scr P$
there is a $p\in\scr P$ such that $s'(c)$ is strictly dominated by $s'(p)$. Since $s'$ is continuous on $\scr P$ and $\scr R$ is dense in $\scr P$,
we can choose the probability $p$ to be in $\scr R$. Then $s_\alpha'(c)=s'(c)$ will be strictly dominated by $s'(p)=s(p)=s_\alpha(p)$, and so
$s_\alpha$ will satisfy condition (a) of the Theorem, and hence also condition (b).

Note that in the case where $\alpha=-\infty$ the modified scoring rule $s_{-\infty}$ has some intuitive plausibility for measuring the accuracy of a credence assignment or forecast.
For if I assign credence zero to an outcome $\omega$ of our sample space, when in fact $\omega$ is the actual outcome, then I make
an error that in an important sense is infinitely bad, because no amount of Bayesian conditionalization on events with non-zero probability
will get me out of the error. The modified scoring rule $s_{-\infty}$ thus makes forecasts that are not regular be much more risky than a
an everywhere finite scoring rule $s$ does. I do not, of course, propose that scoring rules like $s_\alpha$ be generally adopted, but only want to
note that someone sufficiently attached to regularity might have some reason to do so.

\section{Proofs}
Given a probability function $p$, let $\hat p$ be the function from $\Omega$ to $[0,1]$ defined by $\hat p(\omega)=p(\{\omega\})$.
Define $\hatscr P = \{ \hat p : p\in\scr P \}$ and $\hatscr R = \{ \hat p : p \in\scr R \}$ (recall that $\scr R$ is the set of
regular probabilities, i.e., ones that are non-zero on every singleton). Thus, $\hatscr P$ is the set of
non-negative functions on $\Omega$ the sum of whose values is $1$, and $\hatscr R$ is the subset of the strictly positive ones.

Given $p\in \hatscr P$, let $\check p$ be the probability function such that $\check p(\{\omega\}) = p(\omega)$ for all $\omega\in\Omega$.

Given a scoring rule $s$ defined on $\scr P$, by abuse of notation let $s(p) = s(\check p)$ for $p\in\hatscr P$.

Given two functions $f$ and $g$ from $\Omega$ to the extended reals, say that $g$ strictly (weakly) dominates $f$ provided that $f<g$ ($f\le g$) everywhere.
The following is an easy fact.

Let $\cdot$ be a multiplication operation on the extended reals with the stipulation that $a \cdot 0 = 0\cdot a = 0$ for any $a$, finite or
not. With this stipulation, multiplication is upper semicontinuous on $[-\infty,\infty)\times [0,1]$, and continuous at all $(x,y)$ where $x$ is
finite or $y$ is positive. Now define the \textit{extended scalar product} on $X=[0,1]^\Omega \times [-\infty,\infty)^\Omega$ by
$$
    \ip fg = \sum_{\omega\in\Omega} f(\omega) \cdot g(\omega),
$$
using the above stipulation. The following properties are easy to check.

\begin{lem}\label{lem:dot} (a)~The extended scalar product is an upper semicontinuous function from $X$ to $[-\infty,\infty)$. Moreover, (b)~it is continuous
    at any $(f,g)$ such that either $f$ is strictly positive everywhere or $g$ is finite everywhere. Finally, (c)~for a fixed $f\in [0,1]^\Omega$,
    the function $g\mapsto \ip fg$ is continuous on $[-\infty,\infty)^\Omega$.
\end{lem}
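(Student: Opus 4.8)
The plan is to reduce all three claims to the stated pointwise behavior of the stipulated multiplication together with the elementary fact that a finite sum of upper semicontinuous (respectively continuous) extended-real functions is again upper semicontinuous (respectively continuous), provided no partial sum ever meets the indeterminate form $\infty-\infty$. Since every factor $f(\omega)$ lies in $[0,1]$ and every factor $g(\omega)$ lies in $[-\infty,\infty)$, each product $f(\omega)\cdot g(\omega)$ lies in $[-\infty,\infty)$, so all partial sums $\sum_{\omega\in S}f(\omega)\cdot g(\omega)$ are well defined in $[-\infty,\infty)$ and the form $\infty-\infty$ never arises. This is precisely why the codomain is taken to be $[-\infty,\infty)$ rather than $[-\infty,\infty]$, and recording it up front handles the only genuinely delicate bookkeeping.

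For part (a), I would first note that for each fixed $\omega$ the evaluation map $X\to[-\infty,\infty)\times[0,1]$, $(f,g)\mapsto(g(\omega),f(\omega))$, is continuous, being a pair of coordinate projections. Composing it with multiplication, which is upper semicontinuous on $[-\infty,\infty)\times[0,1]$ by hypothesis, shows that $(f,g)\mapsto f(\omega)\cdot g(\omega)$ is upper semicontinuous on $X$. Summing over the finitely many $\omega\in\Omega$ and invoking the remark of the previous paragraph yields upper semicontinuity of $\ip{\cdot}{\cdot}$ with values in $[-\infty,\infty)$.

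For part (b), the key observation is that if $f$ is strictly positive everywhere then each point $(g(\omega),f(\omega))$ has positive second coordinate, and if $g$ is finite everywhere then each such point has finite first coordinate; in either case multiplication is continuous at every $(g(\omega),f(\omega))$ by hypothesis, so each summand is continuous at $(f,g)$ and hence so is the finite sum. For part (c), I fix $f$ and argue coordinatewise: for each $\omega$ the map $g\mapsto f(\omega)\cdot g(\omega)$ is continuous on $[-\infty,\infty)^\Omega$, since it is identically $0$ when $f(\omega)=0$ and is the continuous scaling $t\mapsto f(\omega)\,t$ on $[-\infty,\infty)$ precomposed with a coordinate projection when $f(\omega)>0$; the finite sum of these is continuous in $g$.

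I do not anticipate any real obstacle here; the proof is essentially routine topology. The only two points that deserve explicit care are the choice of codomain that rules out $\infty-\infty$ in the summation, and the correct matching of the two arguments of the stipulated multiplication with the $[-\infty,\infty)$-valued factor $g(\omega)$ and the $[0,1]$-valued factor $f(\omega)$, so that the stated continuity and upper-semicontinuity facts are applied in the right region.
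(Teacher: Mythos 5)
Your argument is correct and matches the paper's intended (but unstated) route: the paper records only the pointwise upper semicontinuity and selective continuity of the stipulated multiplication on $[-\infty,\infty)\times[0,1]$ and then declares the lemma ``easy to check,'' and composing with coordinate evaluations and summing, with the codomain $[-\infty,\infty)$ ruling out the form $\infty-\infty$, is precisely that check spelled out.
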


Observe that $E_p f = \ip{\hat p}f$. This fact will allow us to go back and forth between probabilistic concepts and geometric concepts.

For a subset $F$ of $\R^\Omega$ and a vector $v\in\R^\Omega$, let
$$
    \sigma_F(v) = \sup_{z\in F} \ip vz
$$
be the support function of $F$ at $v$.

\begin{lem}\label{lem:geometrical}
    Assume $s$ is a proper scoring rule on $\scr P$ with $E_p s(p)$ finite for all $p$.
    Let $F$ be the set of finite scores. Suppose that for every convergent sequence $(p_n)$ of members of $\hatscr P$
    with $s(p_n)\in F$ for all $n$, we have $\lim_n \ip{p_n}{s(p_n)}=\ip p{s(p)}$, where $p=\lim_n p_n$.
    Then $\sigma_F(p) = \ip p{s(p)}$ for all $p \in\hatscr P$.
\end{lem}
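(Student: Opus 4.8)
The plan is to establish the two inequalities $\sigma_F(p) \ge \ip{p}{s(p)}$ and $\sigma_F(p) \le \ip{p}{s(p)}$ separately, for each fixed $p \in \hatscr P$. The first is immediate: since $s$ is a scoring rule on $\scr P$ and $E_p s(p) = \ip{p}{s(p)}$ is finite by hypothesis, the score $s(p)$ is a finite vector (indeed $\ip{p}{s(p)}$ finite together with $s(p) \le M$ everywhere forces $s(p)(\omega)$ finite for every $\omega$, since the components with $p(\omega) > 0$ are bounded below and the others are $\le M$ --- wait, we actually only need that $s(p) \in F$). So $s(p) \in F$, and hence $\sigma_F(p) = \sup_{z \in F}\ip{p}{z} \ge \ip{p}{s(p)}$.

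The real content is the reverse inequality. Suppose toward a contradiction that $\sigma_F(p) > \ip{p}{s(p)}$ for some $p$. Then there is a sequence $(z_k)$ in $F$ with $\ip{p}{z_k} \to \sigma_F(p)$, so for large $k$ we have $\ip{p}{z_k} > \ip{p}{s(p)}$. Each $z_k \in F$ is a finite score, so $z_k = s(q_k)$ for some $q_k \in \hatscr P$. By propriety applied at $q_k$ we have $\ip{q_k}{s(q_k)} \ge \ip{q_k}{s(p)}$, i.e. $E_{q_k}s(q_k) \ge E_{q_k}s(p)$; this by itself does not yet contradict anything, so I instead apply propriety at $p$: $\ip{p}{s(p)} \ge \ip{p}{s(q_k)} = \ip{p}{z_k}$. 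But this directly contradicts $\ip{p}{z_k} > \ip{p}{s(p)}$ for large $k$! So in fact the hypothesis about convergent sequences is not even needed for this part --- propriety alone gives $\ip{p}{s(q)} \le \ip{p}{s(p)}$ for every $q \in \scr P$ with $s(q)$ finite, hence $\sigma_F(p) \le \ip{p}{s(p)}$.

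So where does the convergence hypothesis enter? It must be that $F$ is not assumed to consist only of scores $s(q)$ with $q \in \hatscr P$ --- re-reading, $F$ \emph{is} exactly the set of finite scores, so every $z \in F$ does equal $s(q)$ for some $q \in \hatscr P$, and the above argument goes through cleanly using only propriety and finiteness of $E_p s(p)$. I would therefore present the proof in exactly this two-step form: (1) $s(p) \in F$ so $\sigma_F(p) \ge \ip{p}{s(p)}$; (2) for any $z = s(q) \in F$, propriety at $p$ gives $\ip{p}{z} = E_p s(q) \le E_p s(p) = \ip{p}{s(p)}$, whence taking the supremum $\sigma_F(p) \le \ip{p}{s(p)}$. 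The convergence hypothesis in the statement is presumably there because it is the form needed for the downstream application (and is harmless as an added assumption), or because the intended proof of step (2) passes through an approximation argument that the direct estimate renders unnecessary. I expect the only subtlety worth spelling out is the justification that $E_p s(p)$ finite implies $s(p)$ is a finite vector (so that $s(p) \in F$): the components where $p$ vanishes contribute nothing to $E_p s(p)$ and are each $\le M < \infty$, while a $-\infty$ component where $p$ is positive would force $E_p s(p) = -\infty$; hence no component is $\pm\infty$.
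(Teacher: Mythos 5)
There is a genuine gap, and it is precisely where you waved your hands. You claim that $E_p s(p)$ finite forces $s(p)$ to be a finite vector, by noting that components with $p(\omega) > 0$ must be bounded below and that ``the others are $\le M < \infty$; hence no component is $\pm\infty$.'' But being bounded above by $M$ does not rule out $-\infty$: the scoring rule takes values in $[-\infty, M]^\Omega$, and by the paper's definition of $E_p$, the terms with $p(\{\omega\}) = 0$ are simply omitted from the sum. So if $p$ is \emph{not} regular, $s(p)$ can take the value $-\infty$ at an $\omega$ with $p(\{\omega\}) = 0$ while $E_p s(p)$ remains finite. This is not a pathological corner case --- the paper even constructs a family of scoring rules $s_\alpha$ (in particular $s_{-\infty}$) for which this happens on all of $\scr P \setminus \scr R$. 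For such $p$, $s(p) \notin F$, and your derivation of $\sigma_F(p) \ge \ip{p}{s(p)}$ collapses.

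That is exactly why the lemma carries the convergence hypothesis you dismissed as superfluous. Your direction $\sigma_F(p) \le \ip{p}{s(p)}$ is correct and does follow from propriety alone, as you say. But the $\ge$ direction, in the case $s(p) \notin F$, requires approximating $p$ by regular probabilities: take $p_n \in \hatscr R$ with $p_n \to p$; for regular $q$, $E_q s(q)$ finite \emph{does} force $s(q)$ finite (since every $q(\{\omega\}) > 0$), so $s(p_n) \in F$; pass to a subsequence so $s(p_n) \to z$ in $[-\infty, M]^\Omega$; then chain $\sigma_F(p) \ge \lim_n \ip{p}{s(p_n)} = \ip{p}{z} \ge \limsup_n \ip{p_n}{s(p_n)} = \ip{p}{s(p)}$, using the continuity of $g \mapsto \ip{p}{g}$ for fixed $p$ (Lemma~\ref{lem:dot}(c)), the upper semicontinuity of the extended scalar product (Lemma~\ref{lem:dot}(a)), and, in the final equality, the convergence hypothesis of the lemma. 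Without that hypothesis the chain breaks, and indeed the lemma's conclusion can fail.

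So: your reduction is fine when $s(p) \in F$, but you need to split into cases and handle $s(p) \notin F$ with the approximation argument above. The convergence hypothesis is load-bearing, not decorative.
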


\begin{proof}[Proof of Lemma~\ref{lem:geometrical}]
    First, suppose $s(p)\in F$.
    Then for every $z\in F$ we have $\ip pz \le \ip p{s(p)}$ by propriety.
    Since $s(p) \in F$, it follows that $\sigma_F(p) = \ip p{s(p)}$.

    Now suppose $s(p)$ is not finite. Let $(p_n)$ be a sequence in $\hatscr R$ converging
    to $p$. For $q\in\hatscr R$, the fact that $E_q s(q)$ is finite implies that $s(q)$ is
    finite, so $s(p_n)\in F$ for all $n$. Using compactness and passing to a subsequence if necessary,
    assume that $s(p_n)$ converges to some value $z\in [-\infty,M]^\Omega$. Then
$$
    \sigma_F(p) \ge \lim_n \ip p{s(p_n)}= \ip pz \ge \limsup_n \ip{p_n}{s(p_n)} = \ip p{s(p)},
$$
    where the relations follow respectively by definition of $\sigma_F$, the continuity of the extended scalar product for a
    fixed first argument (Lemma~\ref{lem:dot}(c)), the upper semicontinuity of the extended scalar product on $X$ (Lemma~\ref{lem:dot}(a)),
    and the assumptions of our present lemma.

    On the other hand:
$$
    \ip p{s(p)} \ge \ip p{s(q)}
$$
    by propriety for all $q\in\hatscr P$. Hence $\ip p{s(p)} \ge \sigma_F(p)$. Thus, $\sigma_F(p) = \ip p{s(p)}$.
\end{proof}

\begin{lem}\label{lem:limit}
Let $s$ be a proper scoring rule on $\scr P$.
Let $H=\{ p \in\hatscr P : s(p)\text{ is finite}\}$.
For for any $p$ in the closure $\bar H$ of $H$, the limit of $\ip q{s(q)}$ as $q$ tends to $p$ within $H$ exists.
If $s(p)$ is finite, the limit equals $\ip p{s(p)}$.
Finally, if $p_n$ is a sequence in $H$ converging to $p\in\bar H$ such that $\lim_n s(p_n)=r$, then $\ip pr = \lim_n \ip{p_n}{s(p_n)}$.
\end{lem}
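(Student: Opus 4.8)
The plan is to show that, writing $F=\{s(q):q\in H\}$ for the set of finite scores (a subset of $\R^\Omega$), the limit in the lemma always exists and equals the support value $\sigma_F(p)$. If $H=\varnothing$ there is nothing to prove, so assume $F\ne\varnothing$; then $\sigma_F(p)=\sup_{z\in F}\ip pz$ is a finite real for every $p\in\hatscr P$, being $\le M$ (since $z(\omega)\le M$ and $p(\omega)\ge 0$ for all $\omega$) and $>-\infty$ (since $F\ne\varnothing$), and we only ever apply it at points of $\bar H\subseteq\hatscr P$. The one order-theoretic ingredient I would isolate first is that $\sigma_F(p)=\ip p{s(p)}$ whenever $p\in H$: propriety gives $\ip p{s(q)}\le\ip p{s(p)}$ for every $q\in H$ (all quantities finite), and $s(p)\in F$, so the supremum defining $\sigma_F(p)$ is attained at $s(p)$. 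Once the existence claim is in hand this immediately yields the second assertion of the lemma, since ``$s(p)$ finite'' is exactly ``$p\in H$''.

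The core of the argument is a squeeze: if $(p_n)$ lies in $H$ with $p_n\to p\in\bar H$ and $s(p_n)\to r$ in the compact space $[-\infty,M]^\Omega$, then $\lim_n\ip{p_n}{s(p_n)}=\sigma_F(p)=\ip pr$. For the upper bound I would use that $(p_n,s(p_n))\to(p,r)$ in $X$, so upper semicontinuity of the extended scalar product (Lemma~\ref{lem:dot}(a)) gives $\limsup_n\ip{p_n}{s(p_n)}\le\ip pr$; and since each $s(p_n)\in F$ we have $\ip p{s(p_n)}\le\sigma_F(p)$, so continuity of the map $z\mapsto\ip pz$ on $[-\infty,\infty)^\Omega$ (Lemma~\ref{lem:dot}(c)) yields $\ip pr\le\sigma_F(p)$ in the limit. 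For the matching lower bound---the step that genuinely uses propriety rather than mere topology---I would fix any $q\in H$, note that propriety gives $\ip{p_n}{s(p_n)}\ge\ip{p_n}{s(q)}$ for all $n$, observe that $\ip{p_n}{s(q)}\to\ip p{s(q)}$ because $s(q)$ is a fixed finite vector (Lemma~\ref{lem:dot}(b)), hence $\liminf_n\ip{p_n}{s(p_n)}\ge\ip p{s(q)}$, and finally take the supremum over $q\in H$ to get $\liminf_n\ip{p_n}{s(p_n)}\ge\sigma_F(p)$. Chaining $\sigma_F(p)\le\liminf_n\ip{p_n}{s(p_n)}\le\limsup_n\ip{p_n}{s(p_n)}\le\ip pr\le\sigma_F(p)$ forces equality throughout, which is precisely the final assertion of the lemma.

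To finish, I would upgrade this to existence of the limit over all of $H$. Given any sequence $q_k\to p$ in $H$, every subsequence of $(q_k)$ has a further subsequence along which $s(q_{k_j})$ converges in the compact space $[-\infty,M]^\Omega$ to some $r$, and then the squeeze gives $\ip{q_{k_j}}{s(q_{k_j})}\to\sigma_F(p)$, a value that does not depend on the subsequence or on $r$. Hence $\ip{q_k}{s(q_k)}\to\sigma_F(p)$ for every approximating sequence, so $\lim_{q\to p,\,q\in H}\ip q{s(q)}$ exists and equals $\sigma_F(p)$; together with the identity of the first paragraph this also settles the case where $s(p)$ is finite. The point I expect to need emphasis in the writeup is that joint (upper) semicontinuity of the extended scalar product by itself gives only the ``$\le$'' half of the squeeze, while the matching ``$\ge$'' is not a topological fact and must come from propriety---without it the conclusion can fail---so propriety is doing essential work here, compensating for the fact that a merely convex function such as $\sigma_F$ need not be continuous at the boundary of the simplex $\hatscr P$.
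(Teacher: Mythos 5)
Your proof is correct and takes a genuinely different route from the paper's, though the two technical pivots --- upper semicontinuity of the extended scalar product (Lemma~\ref{lem:dot}(a)) for the upper bound, and propriety plus Lemma~\ref{lem:dot}(b) for the lower bound --- are the same. The paper proves existence of the limit indirectly: it fixes two sequences $(p_n)$ and $(p_n')$ converging to $p$, passes to subsequences so that $s(p_n)\to r$ and $s(p_n')\to r'$, shows $\ip pr=\lim_n\ip{p_n}{s(p_n)}$ and similarly for $r'$, and then runs a symmetric double-limit computation ($\ip p{r'}=\lim_m\lim_n\ip{p_n}{s(p_m')}\le\lim_m\lim_n\ip{p_n}{s(p_n)}=\ip pr$, and vice versa) to conclude $L=L'$; the common value is never explicitly named. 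You instead identify the limit as $\sigma_F(p)$ from the outset. The propriety step is run against an arbitrary $q\in H$ rather than against a tail point $p_m$ of the approximating sequence, so taking the supremum gives $\liminf\ge\sigma_F(p)$ directly; and the additional observation that $\ip pr\le\sigma_F(p)$ (since $r$ is a limit of points of $F$, using Lemma~\ref{lem:dot}(c)) closes the squeeze $\sigma_F(p)\le\liminf\le\limsup\le\ip pr\le\sigma_F(p)$. This buys two things: existence becomes immediate via the standard subsequence-of-a-subsequence argument, since $\sigma_F(p)$ is manifestly independent of the approximating sequence, so the paper's double-limit comparison is dispensed with; and you get the sharper conclusion that the limit equals $\sigma_F(p)$ for every $p\in\bar H$ (not merely those with $s(p)$ finite), which partially anticipates the identity $\sigma_F(p)=\ip p{s(p)}$ that the paper establishes separately in Lemma~\ref{lem:geometrical} under additional hypotheses.
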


\begin{proof}
Fix $p\in\bar H$.
To show that a sequence converges to some member of a compact set, it suffices to show that every convergent
subsequence of it converges to the same point. The set $[-\infty,M]$ is compact and $\ip q{s(q)} \in [-\infty,M]$ for all $q$.
Thus to show the existence of our limit, all we need to show is that if $(p_n)$ and $(p'_n)$ are two sequences in $H$
converging to $p$ with $\ip {p_n}{s(p_n)}\to L$ and $\ip {p'_n}{s(p'_n)}\to L'$, then $L=L'$. Moreover, if we can show this,
then letting $p'_n=p$ for all $n$, it will follow that $L=s(p)$ if $s(p)$ is finite.

Thus, suppose $(p_n)$ and $(p'_n)$ are two sequences in $H$ converging to $p$ with $\ip {p_n}{s(p_n)}$ and $\ip {p_n'}{s(p_n')}$
convergent. Passing to subsequences if necessary, assume that $s(p_n)$ and $s(p_n')$ converge respectively to $r$ and $r'$
in $[-\infty,M]^\Omega$.

I now claim that $\ip pr = \lim_n \ip{p_n}{s(p_n)}$. First, note that
$$
    \limsup_n \ip{p_n}{s(p_n)} \le \ip pr
$$
by Lemma~\ref{lem:dot}(a). Next observe that for any fixed $m$ we have
$$
    \ip{p_n}{s(p_n)} \ge \ip{p_n}{s(p_m)}
$$
by propriety.
Since $s(p_m)$ is finite as $p_m\in H$, by Lemma~\ref{lem:dot}(b) the right-hand-side converges to $\ip p{s(p_m)}$ as $n\to\infty$.
Thus:
$$
    \liminf_n \ip{p_n}{s(p_n)} \ge \ip p{s(p_m)}.
$$
Taking the limit as $m\to\infty$ and using Lemma~\ref{lem:dot}(c) we get
$$
    \liminf_n \ip{p_n}{s(p_n)} \ge \ip pr.
$$
Thus we have $\ip pr = \lim_n \ip{p_n}{s(p_n)}$, which is the final remark in the statement of our Lemma.

Thus:
\begin{align*}
    \ip p{r'} &= \lim_m \ip{p}{s(p_m')} \\
              &= \lim_m \lim_n \ip{p_n}{s(p_m')} \\
              &\le \lim_m \lim_n \ip{p_n}{s(p_n)} \\
              &=\lim_m \ip pr = \ip pr,
\end{align*}
where the first equality was due to Lemma~\ref{lem:dot}(c), the second due to Lemma~\ref{lem:dot}(b) and the finiteness of $s(p_m')$,
and the inequality was due to propriety. In exactly the same way, $\ip pr \le \ip p{r'}$. Thus, $\ip pr = \ip p{r'}$.
But $\ip p{r'} = \lim_n \ip{p_n'}{s(p_n')}$, just as we proved in the unprimed case. Thus, $\lim_n \ip{p_n}{s(p_n)} = \lim_n \ip{p_n'}{s(p_n')}$.
\end{proof}

\begin{lem}\label{lem:cone1}
Let $F$ be a nonempty subset of $[-\infty,\infty)^\Omega$.
Suppose $z_0\in [-\infty,\infty)^\Omega$ is such that $\ip{p}{z_0} < \sigma_F(p)$ for all $p\in\hatscr P$.
Then $z_0$ is strictly dominated by some member of $\Conv(F)$.
\end{lem}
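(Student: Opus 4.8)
The plan is to argue by contradiction with a finite-dimensional separating-hyperplane argument, after cutting $F$ down to its finite members $F':=F\cap\R^\Omega$. First I would record two easy reductions. (i)~$F'\ne\emptyset$: for $p\in\hatscr R$ one has $\ip pz=-\infty$ for every $z\in F\setminus F'$ (the $-\infty$-coordinate of such a $z$ lies where $p$ is positive), so the hypothesis $\ip p{z_0}<\sigma_F(p)$ forces $\sigma_F(p)>-\infty$, which is impossible if $F'=\emptyset$. (ii)~It suffices to find a member of $\Conv F'$ strictly dominating $z_0$: if $w\in\Conv F$ and $z_0<w$ everywhere, then each coordinate of $w$ is finite (being $>-\infty$ and $<\infty$), and a convex combination of members of $F$ is finite in every coordinate only when each member used with positive weight is finite, so $w\in\Conv F'$. (In the later uses of the lemma, $F$ is itself a set of finite scores, so $F=F'$ and both reductions are vacuous.)

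Next I would suppose, for contradiction, that no member of $\Conv F'$ strictly dominates $z_0$. Put $N=\{\omega:z_0(\omega)\in\R\}$ and $O=\{u\in\R^\Omega:u(\omega)>z_0(\omega)\text{ for all }\omega\in N\}$; then $O$ is open, convex and nonempty and, by the contradiction hypothesis, disjoint from the convex set $\Conv F'$. Two disjoint nonempty convex subsets of a finite-dimensional space can be separated by a hyperplane, so there is $v\in\R^\Omega\setminus\{0\}$ with $\sigma_{F'}(v)=\sup_{w\in\Conv F'}\ip vw\le\inf_{u\in O}\ip vu$. The decisive point is the orientation of $v$: with any of its points, $O$ contains the entire ray in each direction $e_\omega$ ($\omega\in N$) and the entire line in each direction $e_\omega$ ($\omega\notin N$), so the infimum exceeds $-\infty$ only if $v$ vanishes off $N$ and is nonnegative on $N$, in which case it equals $\ip v{z_0}$. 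Normalizing $p^{*}=v/\sum_\omega v(\omega)\in\hatscr P$ and using positive homogeneity of the support function yields $\sigma_{F'}(p^{*})\le\ip{p^{*}}{z_0}$, which contradicts the hypothesis $\ip{p^{*}}{z_0}<\sigma_F(p^{*})$ the moment $\sigma_{F'}(p^{*})=\sigma_F(p^{*})$ — in particular whenever $F=F'$, so the proof is complete in the case needed for the rest of the paper.

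The step I expect to be the main obstacle is exactly that equality $\sigma_{F'}(p^{*})=\sigma_F(p^{*})$ when $F$ contains members with infinite entries. The separator $p^{*}$ is only known to be supported on $N$, so it may lie on a proper face of the simplex; on such a face a member of $F$ taking the value $-\infty$ only in coordinates outside $\mathrm{supp}(p^{*})$ still contributes finitely to $\ip{p^{*}}{\cdot}$, which can make $\sigma_F(p^{*})$ strictly exceed $\sigma_{F'}(p^{*})$. To circumvent this I would try to show the separator can be taken of full support, perturbing $v$ into the strictly positive orthant while keeping $\sigma_{F'}(v)\le\ip v{z_0}$; controlling the change in $\sigma_{F'}$ near the face spanned by $N$ is the delicate part and is where I would expect to need an additional hypothesis on $F$ — a point that costs nothing in the applications, where $F=F'$. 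A useful preliminary normalization (not sufficient by itself) is to replace $z_0$ by the vector that agrees with it on $N$ and equals a sufficiently negative constant off $N$, using compactness of $\hatscr P$ and lower semicontinuity of $\sigma_F$ to preserve the strict hypothesis uniformly.
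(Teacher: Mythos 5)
Your proof follows essentially the same route as the paper's: form the open orthant of finite points strictly dominating $z_0$ (your $O$, the paper's $Q$ --- these coincide, since any $u\in\R^\Omega$ automatically exceeds the $-\infty$ coordinates of $z_0$), note it misses the convex hull, separate by a hyperplane, show the separating normal is nonnegative, normalize to $p^*\in\hatscr P$, and contradict $\ip{p^*}{z_0}<\sigma_F(p^*)$. The only mechanical difference is how nonnegativity is obtained: the paper perturbs a point of $Q$ in one coordinate, you observe that $O$ contains rays and lines in the coordinate directions. Up to that cosmetic point the two arguments are the same.

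Where you part company is your explicit worry about $F$ containing points with $-\infty$ coordinates, and that suspicion is justified: the Lemma as literally stated is false for such $F$. Take $\Omega=\{1,2\}$, $F=\{(1,-\infty),(0,1),(-\infty,1)\}$, $z_0=(0,0)$. For $p=(p_1,p_2)\in\hatscr P$ with $0<p_1<1$ the extended products of $p$ with the two infinite points are $-\infty$, so $\sigma_F(p)=p_2>0$; at the vertices $p=(1,0)$ and $p=(0,1)$ one gets $\sigma_F(p)=1>0$; hence $\ip p{z_0}=0<\sigma_F(p)$ for every $p\in\hatscr P$. Yet the only finite member of $\Conv F$ is $(0,1)$, which does not strictly dominate $(0,0)$. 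So the extra hypothesis you felt you needed is in fact unavoidable, and the rescue you contemplate (perturbing the separator into the open positive orthant) cannot succeed in general, because there is nothing left to prove. The paper's own proof has the corresponding gap: it invokes hyperplane separation for $Q$ and $\Conv F$ as though the latter were a subset of $\R^\Omega$, and then concludes $\sigma_F(p)\le\alpha$, which silently requires $\ip pz\le\alpha$ also for the $-\infty$-valued members of $F$ --- something the separation does not deliver when $p$ vanishes on the coordinates where $z$ is infinite. This does no damage to the paper, since Lemma~\ref{lem:cone1} is only ever applied with $F=s[\scr P]\cap\R^\Omega\subseteq\R^\Omega$, and under $F\subseteq\R^\Omega$ both your argument and the paper's are complete and correct; but the Lemma's hypothesis ought to read $F\subseteq\R^\Omega$.
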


\begin{proof}[Proof of Lemma~\ref{lem:cone1}]
Let $z_0$ be as in the statement of the Lemma.
Let $Q = \{ z \in \R^\Omega : \forall \omega(z(\omega) > z_0(\omega)) \}$ be the set of points of $\R^\Omega$ strictly
dominating $z_0$. We need to show that $Q \cap \Conv(F) \ne \varnothing$.

Suppose that $Q$ does not intersect $\Conv(F)$. Both $Q$ and $\Conv(F)$ are convex sets. Thus
by hyperplane separation, there is a non-zero $p\in\R^\Omega$ and an $\alpha\in\R$ such that
$\ip pz \ge \alpha$ for all $z\in Q$ and $\ip pz \le \alpha$ for all $z\in \Conv(F)$.

I claim that $p(\omega) \ge 0$ for all $\omega$. To see this, suppose $p(\omega_0) < 0$ for some $\omega_0\in \Omega$. Let $z$ be any member
of $Q$. Let $\beta = \ip pz$. Let $z'(\omega) = z(\omega)$ for $\omega\ne \omega_0$
and $z'(\omega_0) = z(\omega_0) + (\alpha-\beta-1)/p(\omega_0)$. Then $z'\in Q$ since $z\in Q$
while $\beta\ge\alpha$ and $p(\omega_0)<0$. Observe that
$$
    \ip p{z'} = \ip pz + (\alpha-\beta-1) = \alpha - 1,
$$
which is impossible as $\ip p{z'} \ge \alpha$ since $z'\in Q$.

Rescaling if necessary, we may assume that $\sum_\omega p(\omega)=1$ and hence $p\in\hatscr P$.

Since $z_0$ is on the boundary of $Q\subseteq [-\infty,\infty)^\Omega$, by the upper semicontinuity of the extended scalar product (Lemma~\ref{lem:dot}(a))
we have $\ip p{z_0} \ge \alpha$. Moreover, since $\ip pz\le \alpha$ for all $z\in\Conv(F)$, we must have $\sigma_F(p) \le \alpha$.
Thus, $\ip p{z_0} \ge \sigma_F(p)$, which contradicts the assumptions of the Lemma.
\end{proof}

Say that a vector $v$ is normal to a convex set $G$ at a point $z_1\in G$ provided that $\ip vz \le \ip v{z_1}$ for all $z\in G$.
The following Lemma is due to a \textit{MathOverflow} user (fedja 2021). Given a point $z \in \R^n$, let $Q_z$ be the positive orthant
$\R_+^n = (0,\infty)^n$ translated to put its vertex at $z$, i.e., $Q_z = \{ z+w : w \in \R_+^n \}$.

\begin{lem}\label{lem:convex}
    Fix $z_1 \in \R^n$.
    Let $G$ be a closed convex subset of $\mathbb R^n$ whose intersection with $Q_{z_1}$ is
    non-empty and bounded. Then there is a vector $v$ in the positive orthant $\R_+^n$ that is normal to $G$ at some point $z \in G\cap Q_{z_1}$.
\end{lem}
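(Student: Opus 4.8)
The plan is to extract $z$ as the maximizer of a suitable linear functional over the compact set $K = G \cap Q_{z_1}$, and then argue that the normal direction at such a maximizer must be strictly positive. Since $K$ is nonempty, closed (as the intersection of the closed set $G$ with the closed orthant-translate $\overline{Q_{z_1}}$... but here $Q_{z_1}$ is open, so I would first replace $Q_{z_1}$ by its closure: note $G \cap \overline{Q_{z_1}}$ is still bounded, hence compact, and I will need to take care that the point $z$ I find lies in the open $Q_{z_1}$, not merely its closure). Let me instead work with $K = G \cap \overline{Q_{z_1}}$, which is nonempty and compact. First I would fix the direction $e = (1,1,\dots,1)$ and let $z$ maximize $\ip ew = \sum_i w_i$ over $w \in K$; a maximizer exists by compactness. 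The first thing to check is that $z$ lies in the \emph{interior} $Q_{z_1}$ of $\overline{Q_{z_1}}$: if some coordinate $z_i$ equalled $(z_1)_i$, I could try to move along $G$ in a direction increasing that coordinate — but this needs the hypothesis that $G \cap Q_{z_1}$ is nonempty, so pick any $w^* \in G \cap Q_{z_1}$ and note the segment from $z$ toward $w^*$ stays in $G$ (convexity) and enters $Q_{z_1}$, and moving a little along a further $e$-increasing perturbation would contradict maximality — I need to set this up carefully so that $z \in Q_{z_1}$ strictly.

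Next, the key step: at the maximizer $z$ of $\ip e\cdot$ over $K$, I would produce the positive normal $v$ to $G$ (not just to $K$) at $z$. Since $z \in Q_{z_1}$, which is open, there is a ball $B$ around $z$ contained in $Q_{z_1}$, so on $B$ the constraint from $\overline{Q_{z_1}}$ is inactive and $z$ is a local, hence (by convexity of $G$) global, maximizer of $\ip e\cdot$ over $G \cap B$, and then over all of $G$. So $e$ itself is normal to $G$ at $z$ in the sense $\ip ew \le \ip ez$ for all $w \in G$. Taking $v = e \in \R_+^n$ finishes it.

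Actually the main subtlety — and the step I expect to be the real obstacle — is establishing $z \in Q_{z_1}$ rather than on its boundary, because the definition uses the \emph{open} orthant. The danger is that the $e$-maximizer of $G \cap \overline{Q_{z_1}}$ sits on a face $\{w_i = (z_1)_i\}$. To rule this out I would argue: let $w^* \in G \cap Q_{z_1}$ be the given interior point; for small $t \in (0,1]$ the point $z_t = (1-t)z + t w^*$ lies in $G$ and has every coordinate $> (z_1)_i$ (a strict convex combination with $w^*$ strictly inside), so $z_t \in G \cap Q_{z_1} \subseteq K$; but $\ip e{z_t} = (1-t)\ip ez + t\ip e{w^*} \le \ip ez$ forces $\ip e{w^*} \le \ip ez$, which is fine and gives no contradiction — so maximality alone does not place $z$ inside. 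The honest fix is to \emph{choose} the functional more cleverly: maximize $\ip ew + \epsilon \min_i (w_i - (z_1)_i)$, or more simply maximize $\sum_i \log(w_i - (z_1)_i)$ over $G \cap Q_{z_1}$ (a strictly concave function blowing up to $-\infty$ on the boundary), whose supremum over the bounded set $G\cap Q_{z_1}$ is attained at an interior point $z$ by the barrier property together with boundedness. At that $z$ the gradient of $\sum_i \log(w_i - (z_1)_i)$, namely $v$ with $v_i = 1/(z_i - (z_1)_i) > 0$, is a positive vector, and the first-order optimality condition for maximizing a concave function over the convex set $G$ gives $\ip v{w - z} \le 0$ for all $w \in G$ near $z$, hence for all $w \in G$ by convexity — so $v$ is normal to $G$ at $z$ and lies in $\R_+^n$. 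I would write the proof using this logarithmic-barrier functional, as it handles the openness of the orthant and the positivity of the normal in one stroke; the only things to verify are that the supremum is attained (boundedness of $G \cap Q_{z_1}$ plus the barrier) and that the optimality inequality extends from a neighbourhood to all of $G$ (convexity of $G$).
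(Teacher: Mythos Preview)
Your final approach---maximizing the logarithmic barrier $\sum_i \log\bigl(w_i-(z_1)_i\bigr)$ over $G\cap Q_{z_1}$ and taking $v$ to be its gradient at the maximizer---is correct and is essentially the paper's proof: after translating $z_1$ to the origin, the paper maximizes the product $f(x)=\prod_i x_i$ over the closure of $G\cap\R_+^n$, which has the same maximizer and (up to a positive scalar) the same gradient direction as your $\sum_i\log x_i$. The paper's observation that $f$ vanishes on the orthant boundary is exactly the multiplicative form of your barrier argument forcing the maximizer into the open orthant, and its invocation of the normal-cone optimality condition is your first-order inequality $\ip v{w-z}\le 0$ for $w\in G$.
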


\begin{proof}
Translating $G$ as needed, assume without loss of generality that $z_1=0$ and $Q_{z_1}=\R_+^n$.

Let $f(x_1,\dots,x_n) = x_1\cdots x_n$ be the product-of-coordinates function. Let $U = G\cap \R_+^n$. Then $f$ is a continuous function
on the closure $\bar U$ of $U$ in $\R^n$.
Moreover, $\bar U$ is compact by the boundedness requirement, so $f$ attains a maximum on $\bar U$. This maximum is
not attained on the boundary of $\R_+^n$, i.e., on any point with a zero coordinate, since $f$ is zero at any such point,
while there is at least one point in $\bar U$ where $f$ is strictly positive (since $f$ is non-zero everywhere on $U\ne\varnothing$).
Hence, the maximum is attained at a point $z$ of $U$. That point cannot be an interior point (since then $z+(\e,\dots,\e)$
would be in $U$ and $f$ would be bigger there than at $z$), so it must be a boundary point of $U$ that isn't a boundary point of $\R_+^n$.
Thus, $z$ must be a boundary point of $G$. The basic normal cone condition for optimality tells us that the gradient
of $f$ must be normal to $U$ at $z$ for a differentiable $f$ to be optimal at $z$ over $U$ (see \cite[Theorem~6.12]{RW} in the
case of minima). Since $\R_+^n$ is a neighborhood of $z$, and the intersection of $U$ with that neighbhorhood
is the same as that of $G$ with it, it follows that the gradient of $f$ must be normal to $G$ at $z$. But the gradient of
$f$ on $\R_+^n$ always lies in $\R_+^n$, which completes the proof.
\end{proof}

Let $\|z\|_\infty = \max_{\omega\in \Omega} |z(w)|$ for $z\in [-\infty,\infty)^\Omega$ and let
$$
    B(z,r) = \{ z' : \| z'-z \|_\infty < r\}
$$
be the open ball of radius $r$ around a finite $z'$ in this norm.

\begin{lem}\label{lem:hole} Fix $p \in \hatscr R$ and $\alpha\in\R$, and let $K=\{ y \in [-\infty,\infty)^\Omega : \ip py \le \alpha \}$.
    Fix $\e>0$ and $x\in\R^\Omega$ such that $\ip px=\alpha$. Then there is a $\delta \in (0,\e)$ such that no point of $K\cap B(x,\delta)$ is weakly
    dominated by any point of $K\backslash B(x,\e)$.
\end{lem}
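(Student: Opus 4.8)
The plan is to leverage the regularity of $p$: because every coordinate of $p$ is bounded below by a positive constant $m = \min_{\omega\in\Omega} p(\omega)$, a coordinatewise inequality between two points of $K$ lying near the hyperplane $\ip p\cdot = \alpha$ forces the two points to be close in the $\|\cdot\|_\infty$ norm. Concretely, I would set $\delta = \e m/(2(m+1))$, note that $0<\delta<\e$, and aim to prove the following reformulation of the claim: whenever $y\in K\cap B(x,\delta)$ is weakly dominated by some $z\in K$, one actually has $z\in B(x,\e)$ --- since every member of $K\setminus B(x,\e)$ fails to lie in $B(x,\e)$, this is exactly the assertion of the Lemma.

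To carry this out, first observe that $y$ is finite (it lies within $\delta$ of the finite point $x$), and since $z(\omega)\ge y(\omega)$ for all $\omega$, the point $z$ is finite as well; thus the pairing $\ip p\cdot$ behaves as an ordinary linear functional on both $y$ and $z$. Next I would estimate, using $\sum_\omega p(\omega)=1$, that $\ip py = \alpha + \ip p{y-x} > \alpha-\delta$, while $\ip pz\le\alpha$ because $z\in K$. Subtracting, and using that $z-y\ge 0$ coordinatewise with $p>0$,
$$0 \le \ip p{z-y} = \ip pz - \ip py < \delta .$$
Since the summands $p(\omega)\bigl(z(\omega)-y(\omega)\bigr)$ are nonnegative, each is less than $\delta$, so $0\le z(\omega)-y(\omega) < \delta/p(\omega)\le \delta/m$ for every $\omega$. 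Hence $\|z-y\|_\infty\le\delta/m$, and by the triangle inequality $\|z-x\|_\infty < \delta/m + \delta = (m+1)\delta/m = \e/2 < \e$, so $z\in B(x,\e)$, as required.

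The computation itself is routine; the only point calling for care is the extended-real convention, so the main (small) obstacle is simply noting at the outset that a point of $K$ weakly dominating a finite point cannot have a $-\infty$ coordinate. Once that is observed, the defining inequality $\ip p\cdot\le\alpha$ of $K$ may be manipulated in the usual linear fashion, and the rest reduces to the triangle inequality together with the trivial fact that a sum of nonnegative numbers bounds each of its terms.
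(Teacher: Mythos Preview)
Your proof is correct and follows essentially the same idea as the paper's: exploit that $p$ has all coordinates bounded below by $m=\min_\omega p(\omega)>0$ to convert the half-space constraint $\ip pz\le\alpha$ together with $z\ge y$ and $\|y-x\|_\infty<\delta$ into an $\|\cdot\|_\infty$ bound on $z-x$. The only cosmetic difference is in execution: the paper translates to $x=0$, $\alpha=0$ and isolates each coordinate $z(\omega)$ directly from the constraint $\sum_\omega p(\omega)z(\omega)\le 0$ to obtain $\|z\|_\infty\le\delta/m$, whereas you subtract to get $0\le\ip p{z-y}<\delta$ with nonnegative summands, bound $\|z-y\|_\infty<\delta/m$, and finish with the triangle inequality. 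Your route yields a slightly smaller admissible $\delta$ (any $\delta<\e m/(m+1)$ works for you, versus $\delta<\e m$ in the paper), but that is immaterial.
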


\begin{proof}[Proof of Lemma~\ref{lem:hole}]
Translating if necessary, without loss of generality assume $x=0$ and $\alpha=0$. Fix $y \in K\cap B(0,\delta)$. Suppose that $y$ is weakly dominated
by $z\in K$.

Then $-z(\omega)\le -y(\omega) \le \delta$ for all $\omega$ and $\sum_\omega z(\omega) p(\omega) \le 0$ as well as $\sum_\omega y(\omega) p(\omega) \le 0$.
Let $c = 1/\min_\omega p(\omega)$.
Then for any $\omega$:
$$
    z(\omega)\le -\frac{\sum_{\omega'\ne \omega} z(\omega') p(\omega')}{p(\omega)} \le -\frac{\sum_{\omega'\ne \omega} y(\omega') p(\omega')}{p(\omega)}
    \le \frac{\sum_{\omega'\ne \omega} \delta p(\omega')}{p(\omega)}
    \le c\delta.
$$
Moreover $-\delta \le z(\omega)$ and $c\ge 1$, so $\|z\|_\infty \le c\delta$. Thus, we have shown that if $y \in K\cap B(0,\delta)$ is weakly
dominated by $z \in K\backslash B(x,\e)$,
then $\e\ge c\delta$. Hence, if $\e>0$ is fixed, any choice of $\delta \in (0,c^{-1}\e)$ will complete the proof.
\end{proof}

\begin{proof}[Proof of Theorem~\ref{th:nec}]
If $E_p s(p)$ is infinite (i.e., equal to $-\infty$ since we are working with accuracy scores) for some probability function $p$, then $s$ has no extension to a quasi-strictly proper scoring
rule on $\scr C$, as no point $z\in [-\infty,\infty)^\Omega$ will be such that $E_p z < E_p s(p)$. Thus, we may assume for all our proofs that
$E_p s(p)$ is finite for all probabilities $p$, and hence that so is $\ip \hat p{s(\hat p)}$.

Without loss of generality, assume we have accuracy scores with ranges in $[-\infty,-1]$.

Recall that $E_p s(q) = \ip{\hat p}{s(\hat q)}$. If (i) fails, then there is a sequence $p_n\to p$ in $\hatscr P$ such that
$\ip{p_n}{s(p_n)}$ does not converge to $\ip p{s(p)}$ while $s(p_n)$ is finite and $s(p)$ is infinite. By Lemma~\ref{lem:limit}, $\lim_n \ip{p_n}{s(p_n)}=L$ exists
and cannot equal $\ip p{s(p)}$.
Passing to a subsequence if necessary, we may assume that $s(p_n)$ has a limit $r \in [-\infty,-1]^\Omega$, and then
\begin{equation}\label{eq:pr}
    \lim_n \ip{p_n}{s(p_n)} = \ip pr
\end{equation}
by the same Lemma.

Note that
$$
    \ip qr = \lim_n \ip q{s(p_n)} \le \ip q{s(q)}
$$
for all $q\in\hatscr P$ by Lemma~\ref{lem:dot}(c) and propriety. In particular, $L = \ip pr \le \ip p{s(p)}$. Thus, $L < \ip p{s(p)}$
since $\ip{p_n}{s(p_n)}$ does not converge to $\ip p{s(p)}$.  Further, $\ip p{s(p)}$ is negative since our accuracy scores are negative.
Choose $\alpha \in (1,L/\ip p{s(p)})$ so that
\begin{equation}\label{eq:alpha}
    \alpha \ip{p}{s(p)} > L = \ip pr
\end{equation}
Let $x = \alpha s(p)$.

Infinite scores cannot strictly dominate any score. Now I claim that $x$ is not weakly dominated by any finite score. For suppose
that $s(q)$ is finite. Then by Lemma~\ref{lem:dot}(b), propriety, \eqref{eq:pr} and \eqref{eq:alpha} we have:
$$
    \ip p{s(q)} = \lim_n \ip{p_n}{s(q)} \le \lim_n \ip{p_n}{s(p_n)} = \ip pr < \alpha \ip p{s(p)} = \ip px.
$$
And this implies that $x$ is not strictly dominated by $s(q)$.

On the other hand,
\begin{equation}
\label{eq:quasi}
    \ip qx = \alpha \ip q{s(p)} \le \alpha \ip q{s(q)} < \ip q{s(q)}
\end{equation}
for any $q\in\hatscr P$ since $\alpha>0$ and our scores are negative.

Now define $s'(c)=s(c)$ if $c\in\scr P$ and $s'(c)=x$ if $c\in\scr C\backslash\scr P$. Then
$s'$ is quasi-strictly proper by \eqref{eq:quasi}, but $s'(c)$ is not dominated by any score of a probability.
Hence condition (a) fails.

Now suppose (ii) fails. Thus there is a $z_0\in\bdy^+ \Conv F$ and $\e>0$ such that $F\cap B(z_0,\e)=\varnothing$.
Then $\Conv F$ has a normal in $(0,\infty)^\Omega$ at $z_0$. Rescaling if necessary, we can assume that normal is some $p\in\hatscr R$.
Let $K = \{ z \in [-\infty,\infty)^\Omega : \ip pz \le \ip p{z_0} \}$, which then contains $\Conv F$.
By Lemma~\ref{lem:hole}, there is a $\delta \in (0,\e)$ such that no point in $B(z_0,\delta)$ is strictly dominated by any
point in $K\backslash B(z_0,\e)$. Choose a point $z_1$ in $B(z_0,\delta)$ that
is strictly dominated by $z_0$. Note that $z_0$ is a limit of convex combinations of points of $F$. If $q$ is any member of $\hatscr P$ and
$u$ is any point of $F$, then $\ip qu \le \ip q{s(q)}$ by propriety. Thus, the same is true if $u$ is a convex combination of points of
$F$, and by Lemma~\ref{lem:dot}(c) also if $u$ is a limit of convex combinations of points of $F$.

Hence, $\ip q{z_0} \le \ip q{s(q)}$ for
all $q\in\hatscr P$. Since $z_1$ is strictly dominated by $z_0$, it follows that $\ip q{z_1} < \ip q{s(q)}$ for all $q\in\hatscr P$.

On the other hand, since $z_1$ is not strictly dominated by anything in $K\backslash B(z_0,\e)$, it's not strictly dominated by anything in
$F$ as $\Conv F\subseteq K$. Much as before, let $s'(c) = s(c)$ if $c$ is a probability function and $s'(c) = z_1$ if $c$ is not a probability function.
As before, we have strict quasi-propriety and yet no credence that isn't a probability is strictly $s'$-dominated by any probability function.

Now suppose that (i) and (ii) hold.
Let $s'$ be an extension of $s$ to a quasi-strictly proper scoring rule defined for all credences. Fix a non-probability credence $c$ and let
$z_0=s'(c)$. By (i) and Lemma~\ref{lem:geometrical}, $\ip p{s(p)}=\sigma_F(p)$ for all $p$. By Lemma~\ref{lem:cone1} and
quasi-strict propriety, $z_0$ is strictly dominated by some member of $\Conv(F)$.

Let $G$ be the closure of $\Conv(F)$ in $\R^\Omega$.
Then there is a point $z_1 \in (-\infty,\infty)^\Omega$ such that $z_0$ is strictly dominated by $z_1$ and $z_1$ is strictly dominated by some
member of $G$ (e.g., if $z_2$ is a point of $G$ that strictly dominates $z_0$, then let $z_1(\omega)=z_2(\omega)-1$ if $z_0(\omega)$ is infinite, and
$z_1(\omega)=(z_0(\omega)+z_2(\omega))/2$ otherwise).
Let $Q=Q_{z_1}$ be the set of points of $(-\infty,\infty)^\Omega$ that strictly dominate $z_1$. Then $Q\cap G$ is non-empty.
Note that every point $z$ of $F$ satisfies
\begin{equation}\label{eq:u}
\ip uz \le \ip u{s(u)}
\end{equation}
by propriety, where $u(\omega) = 1/|\Omega|$ for all $\omega$, and hence every point $z$ of $G$ satisfies \eqref{eq:u} as well
(a convex combination of points $z$ satisfying \eqref{eq:u} will satisfy it as well, and by Lemma~\ref{lem:dot}(c), so will every point in the closure of $\Conv F$).
Moreover, $\ip u{s(u)}$ is finite. The set of
points $z\in Q$ such that $\ip uz \le \ip u{s(u)}$ is bounded, and hence $Q\cap G$ is bounded.

Since $Q\cap G$ is bounded and non-empty, by Lemma~\ref{lem:convex} (letting $n=|\Omega|$ so that $\R^\Omega$ and $\R^n$ are isomorphic
as vector spaces), there is a $z_3\in Q\cap G$ such
that $z_3\in G$ has a normal in the positive orthant.
Thus, $z_3\in\bdy^+ G=\bdy^+ \Conv F$. By condition (ii), there are points of $F$ arbitrarily close to $z_2$. Since $z_2$ strictly
dominates $z_0$, so do points that are sufficiently close to $z_2$, and hence some point of $F$ strictly dominates $z_0$.
\end{proof}

\begin{proof}[Proof of Proposition~\ref{prp:cont}]
Suppose $s$ is continuous.
We must have $E_p s(p)$ finite for all probability functions $p$ or else quasi-strict propriety is impossible.
It follows that the score of any regular probability is finite.

By Lemma~\ref{lem:dot}(a), $p\mapsto \ip p{s(p)}$ is upper semicontinuous on $\hatscr P$ if the scoring rule is continuous.

Moreover, $\ip p{s(p)}  = \sup_{q\in\scr P} \ip p{s(q)}$. Since $\hatscr R$ is dense in $\hatscr P$ and $q\mapsto \ip p{s(q)}$ is
continuous for a fixed $p$ (using Lemma~\ref{lem:dot}(c)), it follows that $\sup_{q\in\hatscr P} \ip p{s(q)}=\sup_{q\in\hatscr R} \ip p{s(q)}$. Moreover,
since every score of a regular probability is finite, $p\mapsto \ip p{s(q)}$ is continuous by
Lemma~\ref{lem:dot}(b) for $q\in\hatscr R$. Thus, $p\mapsto \sup_{q\in\hatscr R} \ip p{s(q)}$ is the supremum of continuous functions and hence it
is lower semicontinuous at these points. (This observation is due to Nielsen [2021].)

Hence, $p\mapsto \ip p{s(p)}$ is continuous at every point of $\hatscr P$, and so we have (i).

It remains to prove (ii).
Assume first that $s$ is strictly proper.
Let $F$ be the set of finite scores of probabilities.
Then $F$ considered as a subset of $\R^\Omega$ is closed, since it is the intersection with $\R^\Omega$ of the set of
scores of probability functions, and the set of probability functions is compact while $s$ is continuous. Moreover, for any
$p \in \hatscr P$, we have $\ip p{s(p)} > \ip pz$ for all $z\in F\backslash \{ s(p) \}$. It follows that for any $p$, the only
point $z$ in the closed convex hull of $F$ such
that $\ip p{s(p)} = \ip pz$ is $s(p)$ itself, from which (ii) follows.

Now, suppose $s$ is merely quasi-strictly proper. Let $b:\scr C \to [-1,0]$ be any strictly proper continuous accuracy score, for instance
$-1$ plus the Brier score. Let $s_\e = s+\e b$ for any $\e>0$. Then $s_\e$ is a strictly proper continuous score, and (ii) must
hold for it. Let $F_\e = s_\e[\scr P] \cap \R^\Omega$.

Now, consider a point $z_0\in \bdy^+ \Conv F$. Fix $\e>0$. We will show that there is a point of $F$ within distance $\e$ of $z_0$.
Let $\delta=\e/4$. Thus there is a $p\in\hatscr R$ (since any vector in the positive orthant $(0,\infty)^\Omega$ can be rescaled to get a vector in $\hatscr R$) such
that $\ip pz \le \ip p{z_0}$ for all $z\in F$.
The point $z_0$ is a convex combination $z_0 = c_1 w_1 +\dots + c_n w_n$ of points of $F$, where $c_i > 0$ and $\sum_i c_i = 1$.
We then have to have $\ip p{w_i} = \ip p{z_0}$ for each $i$. Choose $p_i \in \hatscr P$ such that $w_i = s(p_i)$. Since $s_\delta$ is
continuous and $\hatscr R$ is dense in $\hatscr P$, for each $i$ choose $p_i'$ of $\hatscr R$ such
that $\| s_\delta(p_i')-s_\delta(p_i) \|_\infty \le \delta$. Then $\| w_i - s_\delta(p_i) \|_\infty \le 2 \delta$ since $s_\delta$ is never
more than $\delta$ away from $s$.

Let $z_0' = \sum_i c_i s_\delta(p'_i)$. Then, $\| z_0' - z_0 \|_\infty \le 2\delta$.
Note that $s_\delta(p_i') \in \bdy^+ F_\e$, since $p_i' \in (0,\infty)^\Omega$ is normal to $F_\e$ at $s_\delta(p_i')$ by the propriety of
$s_\delta$.
Applying the strictly proper case of our Proposition to $s_\delta$, we conclude that there is a $p\in\hatscr P$ such that $\| s_\delta(p) - z_0' \|_\infty \le \delta$
and $s_\delta(p) \in F_\e$. Then, $\| s(p) - z_0' \|_\infty \le 2\delta$, and so $\| s(p) - z_0 \|_\infty \le 4\delta = \e$, which completes the proof.\footnote{I am grateful to Michael Nielsen for many discussions and much encouragement, and to two
anonymous readers for a number of comments that helped with clarity, as well as for discovering a gap in my initial proofs.}
\end{proof}

\end{document}